\newcounter{parbf_num}
\newcommand{\parbf}[1]{ \paragraph{\textbf{\arabic{parbf_num}. #1}} \stepcounter{parbf_num} }
\newcounter{g_num}
\newtheorem{theorem}{Theorem}
\newtheorem{lemma}[theorem]{Lemma}
\newtheorem{remark}[theorem]{Remark}
\newtheorem{example}[theorem]{Example}
\begin{document}	
\title{Aperiodic tilings of manifolds of intermediate growth}

\author{Michał Marcinkowski \textbf{\textit{\&}} Piotr W. Nowak}

\address{Uniwersytet Wroc\l awski \textit{\&} Instytut Matematyczny Polskiej Akademii Nauk}
\email{marcinkow@math.uni.wroc.pl}

\address{Uniwersytet Warszawski \textit{\&} Instytut Matematyczny Polskiej Akademii Nauk}
\email{pnowak@mimuw.edu.pl}
\thanks{The second author was partially supported by a grant from the Foundation for Polish Science.}

\begin{abstract}
We give a homological construction of aperiodic tiles for certain open Riemannian surfaces admitting actions of Grigorchuk groups of intermediate growth.
\end{abstract}

\maketitle

%An tiling system is a triple $\{ \mathcal{T},\mathcal{F}, o \colon \mathcal{F} \to \mathcal{F} \}$
%where $\mathcal{T}$ is a finite collection of disjoint metric spaces (tiles), each with distinguished finite set of disjoint subspaces (faces),  
%$\mathcal{F}$ a collection of all faces and $o$ an inversion. 
%A tiling of a metric space $X$ is a covering $X = \cup_{\alpha} X_{\alpha}$, each $X_{\alpha}$ isomorphic to a tile,
%such that the intersection of every two distinct pieces is naturally identified with faces $f_\alpha$ and $f_\beta$ of corresponding tiles 
%with $o(f_\alpha) = f_\beta$.
 Let $X$ be a noncompact Riemannian manifold.  A set of tiles for $X$ is a triple $\{ \mathcal{T},\mathcal{W},o \}$,
where $\mathcal{T}$ is a finite collection of compact polygons with boundary (tiles), each with distinguished connected faces, 
$\mathcal{W}$ is a collection of all faces of $\mathcal{T}$ and $o \colon \mathcal{W} \to \mathcal{W}$ is an opposition (matching) function. 
A tiling of $X$ is a cover $X = \cup_{\alpha} X_{\alpha}$, where each $X_{\alpha}$ is isometric to a tile in $\mathcal{W}$, 
every non-empty intersection of two distinct pieces is identified with faces $w_\alpha$ and $w_\beta$ 
of the corresponding tiles and satisfies $o(w_\alpha) = w_\beta$.
A tiling is (weakly) aperiodic if no group acting on $X$ cocompactly by isometries preserves the tiling. 
An aperiodic set of tiles of $X$ is a set of tiles, which admits only aperiodic tilings.
Classical examples include aperiodic tiles of the Euclidean spaces, such as Penrose tiles of the plane.

Let $\widetilde{M} \to M$ be an infinite covering of a closed manifold.
In \cite{wb} Block and Weinberger constructed aperiodic tiles for $\widetilde{M}$ when the covering group is non-amenable. 
Their construction relies on the fact that, for such a group, its
uniformly finite homology with coefficients in $\mathbb{Z}$ is trivial in degree 0. Unfortunately, this homology group is highly nontrivial for amenable groups.
Apart from the Euclidean spaces, there are virtually no constructions of aperiodic tiles for amenable manifolds.
Recently the second author with S. Weinberger constructed aperiodic tiles for the real Heisenberg group by a different method (unpublished).

In this note we return to the original method of \cite{wb} and the vanishing of uniformly finite homology in degree 0. 
We construct, using homology with torsion
coefficients, aperiodic tiles for manifolds equipped with proper cocompact
actions of the Grigorchuk group or certain other groups of intermediate growth
It is
well known that such a manifolds are amenable (i.e., regularly
exhaustible).

\subsection*{Acknowledgments} We are grateful to Slava Grigorchuk for helpful corespondence, bringing  \cite{g2} to our attention and helpful comments on the first draft.
The second author would like to thank Shmuel Weinberger for many inspiring discussions.\\

\parbf{Coarse homology.} Coarse locally finite homology was introduced in \cite{roe}.
We briefly define the 0-dimensional coarse locally finite homology group $HX_0$, according to our context. 
Let $G$ be a finitely generated group, let $S$ be its symmetric generating set and let $\Gamma_G = (V_G, E_G)$ be the corresponding oriented Cayley graph. 
Given an abelian group $A$, let $CX_0(G,A)$ be the space of  functions $G\to A$ and 
define $CX_1(G,A)$ to be the space of 1-dimensional chains $c=\sum_{x,y\in G}c_{[x,y]}[x,y]$, such that 
for every $c$ there exists $R>0$ such that $c_{[x,y]}=0$ if $d(x,y)\ge R$.
Let $\partial:CX_1(G,A)\to CX_0(G,A)$ denote the usual differential and 
define $HX_0(G,A) = C_0(G,A)/\operatorname{Image}\partial$.
We will mainly be interested in the case $A=\mathbb{Z}_p$, the cyclic group of order $p$.
In this case the coarse locally finite homology and the uniformly finite homology of \cite{wb} agree.

\begin{lemma}\label{lemma: 0-hom vanishes}
Let $G$ be an infinite finitely generated group and let $p\in \mathbb{N}$. Then $HX_0(G,\mathbb{Z}_p) = 0$.
\end{lemma}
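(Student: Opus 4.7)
The plan is to show that every $f \in CX_0(G, \mathbb{Z}_p)$ is a boundary, by first solving $\partial c = f$ on finite subsets of $G$ and then extracting a pointwise limit by compactness.

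First, I would enumerate $G = \{g_1, g_2, \ldots\}$ and set $F_n = \{g_1, \ldots, g_n\}$. Since $G$ is infinite, I may pick $u_n \in G \setminus F_n$, and since $\Gamma_G$ is connected I may, for each $v \in F_n$, choose a finite path $P_v^{(n)}$ in $\Gamma_G$ from $v$ to $u_n$. Let $[P]$ denote the 1-chain summing the oriented Cayley edges of $P$, and set
\[
c_n \;=\; -\sum_{v \in F_n} f(v)\, [P_v^{(n)}].
\]
Because $\partial[P_v^{(n)}]$ is supported at $v$ and $u_n$ with values $-1$ and $+1$, and because each path contributes zero to $\partial$ at an interior vertex, a direct check gives $(\partial c_n)(v) = f(v)$ for every $v \in F_n$, while $c_n$ has finite support in $CX_1(G, \mathbb{Z}_p)$.

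The key step is then a compactness argument. The space $\mathbb{Z}_p^{E_G}$ with the product topology is compact and metrizable, since $\mathbb{Z}_p$ is finite and $E_G$ is countable. Therefore some subsequence $(c_{n_k})$ converges pointwise to some $c$: for each oriented edge $[x,y]$ the value $c_{n_k}([x,y])$ is eventually constant, equal to $c([x,y])$. For any fixed vertex $v$, we have $v \in F_{n_k}$ for $k$ large, so $\partial c_{n_k}(v) = f(v)$ eventually; local finiteness of $\Gamma_G$ ensures that $\partial c_{n_k}(v)$ depends on only finitely many edge coefficients, so $\partial c_{n_k}(v) \to \partial c(v)$. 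Hence $\partial c = f$, and since $c$ is supported on unit-length Cayley edges, $c \in CX_1(G, \mathbb{Z}_p)$.

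I do not expect a serious obstacle. The argument is driven by the finiteness of $\mathbb{Z}_p$, which makes the coefficient space compact, together with the local finiteness of $\Gamma_G$, which makes the boundary operator continuous in the product topology. Notably the proof treats torsion groups exactly like torsion-free ones, which is essential for the paper's intended application to Grigorchuk-type groups where no element of infinite order is available.
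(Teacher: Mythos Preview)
Your proof is correct, and it takes a genuinely different route from the paper's.

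The paper fixes a spanning tree $T$ in $\Gamma_G$, roots and orients it, and builds the primitive $\psi$ explicitly on the edges of $T$: first on edges with only finitely many descendants (by summing the values of $c$ below), then along an infinite ray from the root, then recursively on the remaining infinite forest. This is a direct, constructive recursion.

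Your argument instead produces finite-support approximants $c_n$ with $\partial c_n = f$ on $F_n$, embeds them in the compact space $\mathbb{Z}_p^{E_G}$, and extracts a pointwise limit; local finiteness of $\Gamma_G$ makes $\partial$ continuous for this topology, so the limit bounds $f$. The verification that $(\partial c_n)(v)=f(v)$ for $v\in F_n$ is clean because $u_n\notin F_n$ guarantees no endpoint collisions, and the limit chain automatically has propagation $1$.

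What each approach buys: the tree construction is explicit and in fact works for an arbitrary abelian coefficient group $A$ (no finiteness is used), whereas your compactness step genuinely needs $\mathbb{Z}_p$ finite. On the other hand, your argument is shorter and avoids the somewhat delicate infinite recursion over subtrees that the paper sketches. For the application to aperiodic tiles one only needs $\mathbb{Z}_p$ anyway, and an actual $\psi$ is still obtained (albeit non-constructively) from the limit, so nothing is lost for the purposes of the paper.
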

\begin{proof}
Let $T$ be a maximal tree in $\Gamma_G$. Fix a root and orient the edges away from the root.  
For a 0-cycle $c$, we construct a 1-chain $\psi$, supported on the edges of the tree, satisfying $\partial \psi = c$.
If there are only finitely many vertices under the edge $e$, define $\psi(e)$ to be the sum of the values of $c$ on the vertices laying beneath $e$. 
We see that $c-\partial \psi$ is zero on the final vertices of the edges as above. Remove these vertices from $T$.
Now consider an infinite ray $\gamma$ starting from the root. $F = T \backslash \gamma$ is a forest of infinite trees 
(the finite components were truncated it in the first step).
It is obvious, that any 0-cycle with coefficients in $\mathbb{Z}_p$ supported on a ray is a boundary 
(we can solve the equation $\partial \psi = c$ consecutively starting from the initial vertex of the ray).
Modify $c$ so that it is non-zero only on $F$. 
We continue as above on each component of $F$,
constructing $\psi$ which bounds $c$.
\end{proof}

\begin{remark}\normalfont
A slightly different, but an ultimately longer proof, can be given using the homological Burnside theorem, a positive solution to a weaker, homological version of the Burnside problem 
on existence of torsion groups \cite[Theorem 3.1]{ns}.
More precisely, on every infinite finitely generated group the fundamental class vanishes in linearly controlled homology with integral coefficients.
\end{remark}
 
\parbf{Construction of the tiles} Let $(\widetilde{M},\widetilde{d}) \to (M,d)$ be an infinite cover of a Riemannian manifold $M$ and let $G$ be the covering group. 
Consider a Dirichlet domain for the action  of $G$ (for some arbitrary $x_0 \in \widetilde{M}$):
\begin{equation*}
D = \{ x \in \widetilde{M} \colon \widetilde{d}(x,x_0) \leq \widetilde{d}(x,g.x_0) \textrm{ for all } g \in G \},
\end{equation*}

\noindent together with a collection of  faces, $W_g = D \cap g.D$ (analogously, we define  faces for every translation $g.D$). 
By the Poincar\'{e} lemma, the finite set $S = \{ g : W_g \neq \emptyset \}$ generates $G$. 
The graph whose vertices correspond to the translations of $D$ by elements of $G$, 
with edges connecting $g.D$ and $h.D$ if and only if $g.D \cap h.D \neq \emptyset$,
is isomorphic to a Cayley graph for $S$. (It is convenient to think that a vertex $g$ of the Cayley graph lays inside $g.D$ and edges labelled by generators pass 
though faces.)

Lemma \ref{lemma: 0-hom vanishes} provides $\psi$, which satisfies $\partial \psi = \sum_{g \in G} g \in HX_0(G,\mathbb{Z}_p)$. For each oriented edge $e$
we decorate a face crossed by $e$ by adding $\psi(e)$ bumps along $e$ (thus a face of a tile where $e$ ends has $\psi(e)$ matching indentations). 
The sets $S$ and $\mathbb{Z}_p$ are finite and performing the above modifications gives only finitely many different tiles.\\

\parbf{The main theorem}
We will consider Dirichlet domains $D$ as above that satisfy a \emph{grid condition}, that  the tiling of $\widetilde{M}$ by translates of $D$ is unique. Such a condition is easy to enforce in various settings, by e.g., considering manifolds $M$ with a trivial isometry group, requiring that the tiles respect 
some triangulation of $\widetilde{M}$ or taking a subgroup of sufficiently large finite index in the non-simply connected case. Many known examples of tilings of Euclidean 
spaces, as well as those constructed in \cite{wb}, are grid tilings in the above sense.

\begin{theorem}\label{theorem: index not divisible}
Let $G$ be an infinite, finitely generated group. Assume that there exists 
$p \in \mathbb{N}$, such that  for every finite index subgroup $H$ of $G$, $p$ is not a factor of  $[G : H]$.
Let $M$ be a compact manifold  and $\widetilde{M}$ be a regular covering of $M$, on which $G$  acts by deck transformation and the fundamental domain satisfies the grid condition.
Then $\widetilde{M}$  admits an aperiodic set of tiles. 
\end{theorem}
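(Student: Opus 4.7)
The plan is to apply the construction of the preceding paragraph with the integer $p$ from the hypothesis, and then show that the resulting decorated tiling is aperiodic by a short descent argument with $\mathbb{Z}_p$-coefficients. Suppose, for contradiction, that a group $\Gamma$ acts cocompactly and isometrically on $\widetilde{M}$ preserving the tiling. The overall strategy is to identify $\Gamma$ (after possibly passing to a finite-index subgroup) with a finite-index subgroup $H \leq G$, and then deduce $p \mid [G:H]$, contradicting the hypothesis on $p$.

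First, I would translate the $\Gamma$-action into combinatorics on the Cayley graph $\Gamma_G$. By the grid condition, $\Gamma$ permutes the tiles $\{g.D : g \in G\}$, which form the vertex set of $\Gamma_G$, and it preserves the ``shared face'' adjacency; hence $\Gamma$ acts on $\Gamma_G$ by graph automorphisms. After passing to a finite-index subgroup $\Gamma_0 \leq \Gamma$ I may assume that these automorphisms are \emph{label-preserving}, that is $\gamma(gs) = \gamma(g)s$ for all $g \in G$ and $s \in S$; this is automatic when the Dirichlet domain $D$ has trivial isometry group, a condition that can be enforced as part of the grid condition (e.g.\ by genericity of the basepoint $x_0$). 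A straightforward induction on word length in $S$ then shows that any such $\gamma$ equals left multiplication by $\gamma(e) \in G$. Thus $\Gamma_0$ embeds into $G$ as a subgroup $H$, and cocompactness of the $\Gamma_0$-action translates into finiteness of the number of $H$-orbits in $G$, that is $[G:H] < \infty$.

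Next, I would carry out the descent. Because $\Gamma_0$ preserves the bump counts on every face, the 1-chain $\psi$ produced by Lemma~\ref{lemma: 0-hom vanishes} is $H$-invariant under left translation, and since $H$ acts freely on $G$ it descends to a 1-chain $\bar\psi$ on the finite Schreier graph $\bar\Gamma := H \backslash \Gamma_G$. The equation $\partial \psi = \sum_{g \in G} g$ then descends to $\partial \bar\psi = \sum_{[g] \in H \backslash G} [g]$, which is the constant-one $0$-chain on the $[G:H]$ vertices of $\bar\Gamma$. Since $S$ generates $G$ the Schreier graph $\bar\Gamma$ is connected, so the image of $\partial \colon C_1(\bar\Gamma, \mathbb{Z}_p) \to C_0(\bar\Gamma, \mathbb{Z}_p)$ is precisely the augmentation kernel. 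Applying the augmentation to the identity $\partial \bar\psi = \sum_{[g]} [g]$ therefore gives $[G:H] \equiv 0 \pmod p$, which contradicts the hypothesis on $p$.

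The main obstacle is the label-preserving reduction in the first step. If $D$ has nontrivial self-isometries, then an element of $\Gamma$ could \emph{a priori} send a face $W_s$ of one tile to a face $W_{s'}$ of a neighbouring tile with $s' \ne s$, yielding a label-permuting graph automorphism of $\Gamma_G$ for which the identification $\Gamma_0 \hookrightarrow G$ would fail. I would handle this either by sharpening the grid condition to force $\mathrm{Isom}(D) = 1$ (allowed by the discussion preceding the theorem), or, in the general case, by passing to the finite-index subgroup of $\Gamma$ on which the local face-permutation data becomes trivial. Once this reduction is in place, the remainder of the argument is the formal descent-and-augmentation computation above.
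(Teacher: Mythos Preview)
Your approach and the paper's are essentially the same, and the descent-and-augmentation computation is exactly the right endgame. There is, however, a genuine gap in scope: you only argue that the particular tiling produced by the construction is aperiodic, whereas the theorem asserts the existence of an aperiodic \emph{set of tiles}, meaning \emph{every} tiling by those tiles must be aperiodic. Concretely, when you write ``the $1$-chain $\psi$ produced by Lemma~\ref{lemma: 0-hom vanishes} is $H$-invariant'', you are using that the bump pattern of the given tiling equals $\psi$; but for an arbitrary tiling by the same finite tile set there is no reason the bump data agrees with the $\psi$ you fixed at the outset.

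The paper closes this gap with one extra line. Given \emph{any} tiling by the constructed tiles (the grid condition pins the underlying cells to the translates $g.D$), define a new $1$-chain $\psi'$ by reading off the signed number of bumps on the face crossed by each edge $e$. The matching rules themselves---not the particular $\psi$---force $\partial\psi'=\sum_{g\in G} g$ in $\mathbb{Z}_p$, and it is $\psi'$, not $\psi$, that is invariant under any group preserving that tiling. Your descent argument then applies verbatim to $\psi'$. This is the key point at which the bump/indentation rules on the tiles (as opposed to the auxiliary choice of $\psi$) carry the proof.

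One place where you are more careful than the paper: you worry explicitly about whether an isometry of $\widetilde{M}$ preserving the tiling must act on $\Gamma_G$ by a \emph{label-preserving} automorphism, and hence by a left translation. The paper simply asserts that such a group ``acts on $\Gamma$ and is a finite-index subgroup of $G$'', tacitly absorbing this into the grid condition; your discussion of how to force $\mathrm{Isom}(D)=1$ or pass to a finite-index subgroup is a useful clarification.
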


\begin{proof} 

Apply the above construction of a finite set of tiles to $\widetilde{M} \to M$ with some $\psi$, satisfying
$\partial \psi =  \sum_{g \in G} g \in HX_0(G,\mathbb{Z}_p)$. Now choose any tiling of $\widetilde{M}$. From the grid condition we can assume that 
every tile (modulo modifications) is a translation of $D$. 
Define the chain $\psi'$ as follows: $\psi'(e)=$ the number of
bumbs on the face crossed
by $e$, with the appropriate sign.
By the definition of the matching rules we have $\partial \psi' =  \sum_{g \in G} g$. 
(Note that in general, $\psi'$ might be different from $\psi$ if the tiling is different from the one which appears in the construction.)

Assume now that $G'$ acts cocompactly by isometries, respecting the tiling. 
$G'$ also acts on $\Gamma$ and, therefore, is a finite index subgroup of $G$. 
Observe that $\psi'$ descends to $\Gamma /G'$ and we have the following equality in $\mathbb{Z}_p$:
\begin{equation*}
[G:G'] = \sum_{v \in \Gamma /G'} 1 = \sum_{e \in \Gamma /G'} \psi'(e) + \psi'(e^-) = 0.
\end{equation*}
Since  $p$ does not divide  $[G:G']$ we get a contradiction.
\end{proof}

We will now construct interesting examples of amenable manifolds to which our theorem applies. 
Let $G$ be a finitely  generated group. There exists a closed manifold $M$ and a regular covering $\widetilde{M} \to M$ with $G$ acting by deck transformations. 
Indeed, given a projection $p : H \to G$,
where $H$ is finitely presented (e.g., free group) we can take a compact manifold $M$ with  the fundamental group $\pi_1(M)=H$ 
and a regular covering corresponding to $ker(p)\subseteq H$.
Note that $M$ can be chosen to be a closed oriented surface with sufficiently large genus $2g$
(there is a projection of the surface group onto the free group $F_g$) and that by smooth modifications of the metric on $M$ we 
can ensure the grid condition for a Dirichlet domain in $\widetilde{M}$.

\begin{example}\normalfont
The Grigorchuk groups of intermediate growth are amenable, residually finite, finitely generated torsion 2-groups.
For the definition see \cite{g1} and for the careful construction of surfaces on which those groups act by covering actions see \cite[\S 4.]{g2}. 
By applying the above construction with $p=3$ we obtain, by Theorem \ref{theorem: index not divisible}, aperiodic tiles for such coverings. 
\end{example}

\begin{example}\normalfont
In \cite{fg} a finitely generated, residually finite group of intermediate growth was constructed. This group, in contrast to the Grigorchuk groups,
is virtually torsion-free \cite[Theorem 6.4]{bg} and every finite quotient is a 3-group \cite[Theorem 6.5]{bg}. By a similar construction as before we obtain surfaces
on which such groups act properly cocompactly and Theorem \ref{theorem: index not divisible}  with $p=2$ provides aperiodic tiles for such manifolds.
\end{example}

\end{document}